\title{Fragmenting random permutations}
\date{December 4, 2007}
\author{Christina Goldschmidt, James B.\ Martin and Dario Span\`o
  \thanks{\texttt{$\{$goldschm,martin,spano$\}$@stats.ox.ac.uk}}\\
\makeatother
\emph{Department of Statistics, University of Oxford}}
\newcommand{\E}[1]{\ensuremath{\mathbb{E} \left[#1 \right]}}
\newcommand{\Prob}[1]{\ensuremath{\mathbb{P} \left(#1 \right)}}
\newcommand{\I}[1]{\ensuremath{\mathbbm{1}_{ \{ #1 \} }}}
\newcommand{\R}{\ensuremath{\mathbb{R}}}
\newcommand{\Z}{\ensuremath{\mathbb{Z}}}
\newcommand{\N}{\ensuremath{\mathbb{N}}}
\renewcommand{\subset}{\subseteq}
\newcommand{\equidist}{\ensuremath{\stackrel{d}{=}}}
\newcommand{\cS}{\ensuremath{\mathcal{S}}}
\newcommand{\cP}{\ensuremath{\mathcal{P}}}
\newcommand{\bw}{\ensuremath{\mathbf{w}}}
\newcommand{\bv}{\ensuremath{\mathbf{v}}}
\newtheorem{thm}{Theorem}[section]
\newtheorem{lemma}[thm]{Lemma}
\newtheorem{proposition}[thm]{Lemma}
\newtheorem{prop}[thm]{Proposition}
\begin{document}
\maketitle

\begin{abstract}
  \noindent \textbf{Problem 1.5.7 from Pitman's Saint-Flour lecture
    notes~\cite{PitmanStFl}:} Does there exist for each $n$ a
  $\mathcal{P}_n$-valued fragmentation process $(\Pi_{n,k}, 1 \leq k
  \leq n)$ such that $\Pi_{n,k}$ is distributed like the partition
  generated by cycles of a uniform random permutation of $[n]$
  conditioned to have $k$ cycles? We show that the answer is yes.  We
  also give a partial extension to general exchangeable Gibbs
  partitions.
\end{abstract}

\section{Introduction}
Let $[n]=\{1,2,\dots, n\}$, let $\cP^n$ be the set of partitions of $[n]$,
and let $\cP^n_k$ be the set of partitions of $[n]$ into precisely $k$ blocks.

The main result of this paper concerns the partition of $[n]$ 
induced by the cycles of a uniform random permutation of $[n]$.
We begin by putting this in the context of more
general \emph{Gibbs partitions}.
Suppose that $v_{n,k}$, $n \geq 1$, $1 \leq k \leq n$ is a triangular array of
non-negative reals and $w_j$, $j \geq 1$ is a sequence of non-negative
reals. For given $n$, a random partition $\Pi$ is said to have the
Gibbs$(\bv,\bw)$ distribution on $\cP^n$ if, for any $1 \leq k \leq n$
and any partition $\{A_1, A_2,\dots, A_k\}\in\cP^n_k$, we have
\[
\Prob{\Pi=\{A_1,A_2,\dots, A_k\}} = v_{n,k} \prod_{j=1}^k w_{|A_j|}.
\]
In order for this to be a well-defined distribution, the weights
should satisfy the normalisation condition
\[
\sum_{k=1}^n v_{n,k} B_{n,k}(\bw) = 1,
\]
where
\[
B_{n,k}(\bw) := \sum_{\{A_1, A_2, \ldots, A_k\} \in \cP^n_k}
\prod_{j=1}^k w_{|A_j|}.
\]
$B_{n,k}(\bw)$ is a partial Bell polynomial in the variables $w_1,
w_2, \ldots$.  Let $K_n$ be the number of blocks of $\Pi$.  Then it is
straightforward to see that
\[
\Prob{K_n = k} = v_{n,k} B_{n,k}(\bw)
\]
and so the distribution of $\Pi$ conditioned on the event $\{K_n =
k\}$ does not depend on the weights $v_{n,k}$, $n \geq 1$, $1 \leq k
\leq n$:
\[
\Prob{\Pi=\{A_1,A_2,\dots, A_k\} | K_n=k} = \frac{\prod_{j=1}^{k}
  w_{|A_j|}}{B_{n,k}(\bw)}.
\]
By a Gibbs$(\bw)$ partition on $\cP^n_k$ we will mean a
Gibbs$(\bv,\bw)$ partition of $[n]$ conditioned to have $k$ blocks.

For given $n$, a Gibbs$(\bw)$ fragmentation process is then a process
$(\Pi_{n,k}, 1 \leq k \leq n)$ such that $\Pi_{n,k}\in\cP^n_k$ for all
$k$, which satisfies the following properties:
\begin{itemize}
\item[(i)]
for $k=1,2,\dots, n$, we have that $\Pi_{n,k}$ is a Gibbs($\bw$) partition;
\item[(ii)]
for $k=1,2,\dots, n-1$,
the partition $\Pi_{n,k+1}$ is obtained from the
partition $\Pi_{n,k}$ by splitting one of the blocks into two parts.
\end{itemize}

If $w_j=(j-1)!$, the Gibbs$(\bw)$ distribution on $\cP^n_k$ is the
distribution of a partition into blocks given by the cycles of a
uniform random permutation of $[n]$, conditioned to have $k$
cycles.  Problem 1.5.7 from Pitman's Saint-Flour lecture notes
\cite{PitmanStFl} asks whether a Gibbs fragmentation exists for these
weights; see also \cite{Berestycki/Pitman} for further discussion and
for results concerning several closely related questions.

In Section \ref{sec:existence} we will show that such a process does indeed exist.
Using the Chinese restaurant process construction of a random permutation,
we reduce the problem to one concerning sequences of independent Bernoulli random
variables, conditioned on their sum. In Section \ref{sec:recursive} we
describe a more explicit recursive construction of such a fragmentation process.
Finally in Section~\ref{sec:extension} we consider the
properties of Gibbs$(\bw)$ partitions with more general
weight sequences, corresponding to a class of exchangeable partitions of $\N$
(and including the two-parameter family of $(\alpha,\theta)$-partitions).
For these weight sequences we prove that, for fixed $n$,
one can couple partitions of $[n]$ conditioned to have $k$ blocks,
for $1\leq k\leq n$,
in such a way that the set of elements which
are the smallest in their block is increasing in $k$.
This extends the result from Section \ref{sec:existence} on Bernoulli random variables
conditioned on their sum; it is a necessary but not sufficient condition for
the existence of a fragmentation process.

\section{Existence of a fragmentation process}\label{sec:existence}

Consider the \emph{Chinese restaurant process} construction of a
uniform random permutation of $[n]$, due to Dubins and Pitman (see,
for example, Pitman~\cite{PitmanStFl}).  Customers arrive in the
restaurant one by one.  The first customer sits at the first table.
Customer $i$ chooses one of the $i-1$ places to the left of an
existing customer, or chooses to sit at a new table, each with
probability $1/i$.  So we can represent our uniform random permutation
as follows.  Let $C_2, \ldots, C_n$ be a sequence of independent
random variables such that $C_i$ is uniform on the set
${1,2,\ldots,i-1}$.  Let $B_1, B_2, \ldots, B_n$ be independent
Bernoulli random variables, independent of the sequence $C_2, C_3,
\ldots, C_n$ and with $B_i$ having mean $1/i$.  If $B_i = 1$ then
customer $i$ starts a new table.
Otherwise, $C_i$ gives the label of the customer whom he sits next to.
The state of the system after $n$ customers have arrived
describes a uniform random permutation of $[n]$;
each table in the restaurant corresponds to a cycle of the permutation,
and the order of customers around the table gives the order in the cycle.
Write $\Pi(B_1, B_2, \ldots, B_n, C_2, C_3, \ldots, C_n)$ for the random
partition generated in this way (the blocks of the partition
correspond to cycles in the permutation, i.e.\ to tables in the restaurant).

This construction has two particular features that will be important.
Firstly, the number of blocks in the partition is simply $\sum_{i=1}^n
B_i$.  So if we condition on $\sum_{i=1}^n B_i=k$, we obtain precisely
the desired distribution on $\cP^n_k$, of the partition obtained from
a uniform random permutation of $[n]$ conditioned to have $k$ cycles.
Secondly, if one changes one of the $B_i$ from 0 to 1 (hence
increasing the sum by 1), this results in one of the blocks of the
partition splitting into two parts.

Hence, we can use this representation to construct our sequence
of partitions $(\Pi_{n,k}, 1 \leq k \leq n)$.
We will need the following result.
\begin{prop}\label{couplingprop}
Let $n$ be fixed. Then there exists a coupling of random variables
$B_i^k$, $1\leq i\leq n$, $1\leq k\leq n$ with the following properties:
\begin{itemize}
\item[(i)]
for each $k$,
\[
(B_1^{k}, B_2^{k}, \ldots, B_n^{k}) \equidist \left(B_1, B_2,
  \ldots, B_n \Bigg| \sum_{i=1}^n B_i = k\right);
\]
\item[(ii)]
for all $k$ and $i$, if $B_i^k=1$ then $B_i^{k+1}=1$, with probability 1.
\end{itemize}
\end{prop}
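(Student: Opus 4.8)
The plan is to build the coupling by induction on $k$, exhibiting at each step a way to pass from a configuration with $k$ ones to one with $k+1$ ones, simply by switching a single (suitably chosen random) coordinate from $0$ to $1$. Concretely, I would construct $(B^k_i)_{i,k}$ one level of $k$ at a time: start with the unique configuration having $k=1$ (only $B^1_1=1$), and given $(B^k_1,\dots,B^k_n)$ distributed as $(B_1,\dots,B_n\mid \sum B_i=k)$, I want to produce $(B^{k+1}_1,\dots,B^{k+1}_n)$ with the correct conditional law, agreeing with the level-$k$ configuration except that one extra coordinate (chosen among the indices $i$ with $B^k_i=0$) has been flipped to $1$. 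Property (ii) is then automatic from the construction, and property (i) has to be checked at each inductive step.

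The heart of the matter is therefore the following one-step claim: if $(B_1,\dots,B_n)$ are independent Bernoulli$(1/i)$, then the conditional law of $(B_1,\dots,B_n)$ given $\sum B_i = k+1$ is a \emph{mixture} of the conditional laws given $\sum B_i = k$ with one extra coordinate turned on; equivalently, one can sample $(B_1,\dots,B_n\mid \sum=k+1)$ by first sampling $(B_1,\dots,B_n\mid \sum=k)$ and then, conditionally on that, choosing one of the currently-zero coordinates to set to $1$, with appropriate probabilities depending on the current configuration. This is exactly the statement that a certain ``downward'' Markov kernel from level $k+1$ to level $k$ (delete a uniform-ish chosen $1$) maps the conditioned-at-$k+1$ law to the conditioned-at-$k$ law, and one then reverses it. I would verify this by a direct computation with the explicit formula
\[
\Prob{(B_1,\dots,B_n)=(b_1,\dots,b_n) \,\Big|\, \textstyle\sum_i B_i = m} = \frac{1}{Z_{n,m}} \prod_{i : b_i = 1} \frac{1}{i-1}
\]
(using that $1/i \cdot (\text{odds})$ gives weight $\tfrac{1/i}{1-1/i}=\tfrac1{i-1}$ per chosen coordinate, times a normalising constant $Z_{n,m}$ that only depends on $n,m$), checking that the ratio of weights between a configuration at level $k+1$ and the level-$k$ configuration obtained by removing the $1$ in position $j$ is proportional to $1/(j-1)$, which pins down the required transition probabilities and shows they are consistent.

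Having established the one-step claim, I would assemble the full coupling: define the joint law of $(B^k)_k$ by the Markov chain in $k$ whose one-step transitions are exactly those given by the claim, started from the deterministic level-$1$ configuration. Marginal correctness of each $B^k$ follows by induction using the claim, giving (i); monotonicity of the set of $1$'s in $k$ is built in, giving (ii). The step I expect to be the main obstacle is the verification of the one-step claim — specifically, confirming that the flip probabilities obtained from the weight ratios are genuinely non-negative and sum to $1$ over the admissible positions $j$, i.e.\ that the combinatorial identity relating $Z_{n,k}$ and $Z_{n,k+1}$ (essentially a recursion for unsigned Stirling numbers of the first kind, since $Z_{n,m}=\binom{n}{m}^{-1}\!\cdot\!(\text{something})$, or more precisely $n! \cdot Z_{n,m}$ counts permutations of $[n]$ with $m$ cycles) comes out right. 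Everything else is bookkeeping.
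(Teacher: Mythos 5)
Your overall plan (build the coupling as a Markov chain in $k$, adding one $1$ per step) is the right shape, and your reduction to the ``one-step claim'' is fine; the paper also reduces to coupling consecutive levels $k$ and $k+1$ and then composing. But the one-step claim \emph{is} the whole content of the proposition, and the method you propose for verifying it does not work. You assert that the weight ratios --- configuration $b+e_j$ at level $k+1$ has weight equal to that of $b$ times $1/(j-1)$ --- ``pin down the required transition probabilities'' and that checking consistency is a bookkeeping identity for Stirling numbers. This is false: the upward transition probabilities are not determined by local weight ratios, and the natural candidate (flip the zero at position $j$ with probability proportional to $1/(j-1)$) does not reproduce the correct level-$(k+1)$ marginal. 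Already for $n=4$, $k=2$: the conditional laws are $(6/11,3/11,2/11)$ on $\{(1,1,0,0),(1,0,1,0),(1,0,0,1)\}$ and $(3/6,2/6,1/6)$ on $\{(1,1,1,0),(1,1,0,1),(1,0,1,1)\}$, but the proportional kernel gives mass $\tfrac{6}{11}\cdot\tfrac{3}{5}+\tfrac{3}{11}\cdot\tfrac{3}{4}=\tfrac{117}{220}\neq\tfrac{1}{2}$ to $(1,1,1,0)$. Indeed no canonical formula can be ``pinned down'': the admissible couplings form a simplex (for $n=4$ there is a one-parameter family, with extreme solutions in which some transitions have probability zero), so any proof that tries to exhibit a single explicit kernel by ratio considerations needs a genuinely new idea, and the required deletion probabilities, where they exist, depend on the configuration in a non-local way.

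What is actually needed is an existence (feasibility) argument rather than an explicit kernel. The clean route, which is the paper's, is Strassen's theorem (equivalently max-flow/min-cut or Hall's condition): a monotone one-flip coupling of $p^k$ and $p^{k+1}$ exists if and only if for every set $C\subset\cS_k$ one has $\sum_{b\in C}p^k(b)\leq\sum_{b'\in N(C)}p^{k+1}(b')$, where $N(C)$ is the set of level-$(k+1)$ configurations dominating some element of $C$. This reduces the problem to a correlation-type inequality: the conditional probability of any union of events of the form $\{B_i=1\ \forall i\in A\}$, given $\sum_i B_i=k$, is nondecreasing in $k$. That inequality is an instance of Efron's monotonicity theorem for increasing functions of independent Bernoulli variables conditioned on their sum, whose proof uses the log-concavity of the law of the sum. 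Your proposal is missing exactly this step; without it, the claim that the level-$(k+1)$ law is a mixture of one-flip updates of the level-$k$ law is unproved, and the ``direct computation'' you sketch in its place fails.
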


So we fix $C_2, C_3, \ldots, C_n$, and define
\[
\Pi_{n,k} = \Pi(B_1^{k}, B_2^{k}, \ldots, B_n^{k}, C_2, C_3, \ldots, C_n).
\]
Then $(\Pi_{n,k}, 1 \leq k \leq n)$ is the desired Gibbs fragmentation process.

It remains to prove Proposition \ref{couplingprop}.
Write $B^{k} =
(B_1^{k}, B_2^{k}, \ldots, B_n^{k})$, $b = (b_1, b_2, \ldots, b_n)$ and
\[
p^{k}(b) := \Prob{B^{k,n} = b},
\]
where $b_i \in \{0,1\}$ for $1 \leq i \leq n$.  Clearly, $p^{k}(b)$ is
only non-zero for sequences $b$ having exactly $k$ $1$'s.  Write
$\cS_k$ for the subset of $\{0,1\}^n$ consisting of these sequences
and write $b \prec b'$ whenever $b'$ can be obtained from $b$ by
replacing one of the co-ordinates of $b$ which is $0$ by a $1$.
We need a process $(B^1, B^2, \dots, B^n)$ whose $k$th marginal
$B^k$ has distribution $p^k$ and such that, with probability 1,
$B^k\prec B^{k+1}$ for each $k$.

It is enough to show that for each $k$, we can couple $B^k$ with
distribution $p^k$ and $B^{k+1}$ with distribution $p^{k+1}$ in such a
way that $B^k\prec B^{k+1}$.  (The couplings can then be combined, for
example in a Markovian way, to give the desired law on the whole
sequence). As noted in Section 4 of Berestycki and
Pitman~\cite{Berestycki/Pitman}, a necessary and sufficient condition
for such couplings to exist with specified marginals and order
properties was given by Strassen; see for example Theorem 1 and
Proposition 4 of \cite{KKO}.  (Strassen's theorem may be seen as a
version of Hall's marriage theorem \cite{Hall} and is closely related
to the max-flow/min-cut theorem \cite{EFS,FF}).  The required
condition may be stated as follows.  For $C \subset \cS_k$, write
$N(C) = \{b' \in \cS_{k+1}: b \prec b' \text{ some $b \in C$}\}$.  We
then need that for all $C \subset \cS_k$,
\begin{equation}\label{Strassencondition}
\sum_{b \in C} p^k(b) \leq \sum_{b' \in N(C)} p^{k+1}(b')
\end{equation}

\begin{figure}
\begin{center}
\psfrag{a}{$\frac{6}{11}$}
\psfrag{b}{$\frac{3}{11}$}
\psfrag{c}{$\frac{2}{11}$}
\psfrag{e}{$\frac{3}{6}$}
\psfrag{f}{$\frac{2}{6}$}
\psfrag{g}{$\frac{1}{6}$}
\includegraphics{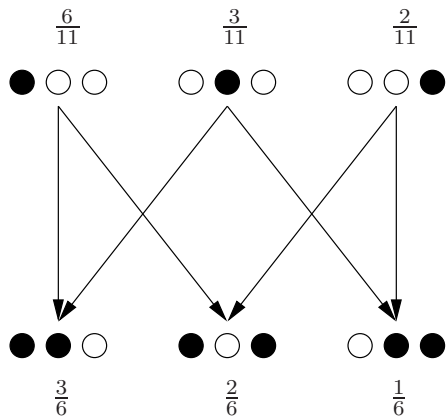}
\end{center}
\caption{The case $n = 4$ and $k=2,3$.
Since $B_1$ is always 1, we omit it from the picture.
For $2 \leq i \leq 4$, a
  filled circle in position $i$ indicates that $B_i^{k} = 1$; an empty
  circle indicates that it is $0$.  The arrows indicate which states
  with $k=3$ can be obtained from those with $k=2$; the numbers are the
  probabilities $p_k(b)$ of the states.}
\label{fig:simplest}
\end{figure}

Phrasing things a little differently, we need the following
proposition.

\begin{prop} \label{prop:cond}
Let $A_1, A_2, \ldots, A_m$ be any collection of distinct $k$-subsets of $[n]$ and let
\[
E_j = \{B_i = 1 \ \forall\ i \in A_j\},\ 1 \leq j \leq m.
\]
Then
\[
\Prob{E_1 \cup E_2 \cup \cdots \cup E_m \Bigg| \sum_{i=1}^n B_i = k}
\leq \Prob{E_1 \cup E_2 \cup \cdots \cup E_m \Bigg| \sum_{i=1}^n B_i = k+1},
\]
$1 \leq k \leq n-1$.
\end{prop}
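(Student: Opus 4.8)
The plan is to establish the inequality in Proposition~\ref{prop:cond} by a direct computation, exploiting the fact that the $B_i$ are independent with $\Prob{B_i=1}=1/i$. Write $q_i = 1/i$ and let $Z_k = \Prob{\sum_{i=1}^n B_i = k}$ be the normalising constant. For a fixed collection $A_1,\dots,A_m$ of $k$-subsets of $[n]$, inclusion-exclusion on the events $E_j$ gives
\[
\Prob{\textstyle\bigcup_j E_j, \ \sum_i B_i = k}
= \sum_{\emptyset \neq S \subseteq [m]} (-1)^{|S|+1}
\Prob{\textstyle\bigcap_{j\in S} E_j, \ \sum_i B_i = k}.
\]
Each term $\Prob{\bigcap_{j\in S}E_j, \sum_i B_i = k}$ factorises: writing $U_S = \bigcup_{j\in S}A_j$, the event forces $B_i=1$ for $i \in U_S$, so it equals $\big(\prod_{i\in U_S} q_i\big)\,\Prob{\sum_{i\notin U_S} B_i = k - |U_S|}$. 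Dividing by $Z_k$, we see that the left-hand side of the desired inequality is a signed sum of terms of the form $\big(\prod_{i\in U_S}q_i\big)\,Z'_{k-|U_S|}/Z_k$, where $Z'$ refers to the truncated product. The comparison with the $k{+}1$ version amounts to comparing, coordinatewise in $S$, the ratios $Z'_{k-|U_S|}/Z_k$ and $(Z')_{k+1-|U_S|}/Z_{k+1}$.

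Because of the signs in inclusion-exclusion, a termwise comparison is not immediately valid, so the first real step is to reorganise. I would instead argue via a \emph{size-biasing / conditioning} identity. The cleanest route: condition further on the values $(B_i : i \in U)$ for a suitable index set. Concretely, the event $\bigcup_j E_j$ depends on $B$ only through the coordinates in $V := \bigcup_{j=1}^m A_j$. So let $T = \sum_{i\in V} B_i$ and $T' = \sum_{i \notin V} B_i$; these are independent, and conditionally on $\sum_i B_i = k$ we have $T + T' = k$. Then
\[
\Prob{\textstyle\bigcup_j E_j \,\Big|\, \sum_i B_i = k}
= \sum_{t} \Prob{T = t \,\Big|\, \sum_i B_i = k}\,
\Prob{\textstyle\bigcup_j E_j \,\Big|\, \textstyle\sum_{i\in V}B_i = t},
\]
where the inner probability is increasing in $t$ (more ones among the relevant coordinates can only make it easier to cover some $A_j$ — this monotonicity should be checked, but it is intuitive and follows from a coupling of $(B_i)_{i\in V}$ conditioned on its sum, using a one-dimensional version of the result for the full vector). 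So it suffices to show that the conditional law of $T$ given $\sum_i B_i = k+1$ stochastically dominates that given $\sum_i B_i = k$. This is a statement purely about two independent sums of (non-identical) Bernoullis conditioned on their total, and should follow from a likelihood-ratio (MTP$_2$/log-concavity) argument: the joint law of $(T, T')$ has a product-of-log-concave structure, so conditioning the total on a larger value pushes mass of $T$ upward in the likelihood-ratio order, hence stochastically.

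The main obstacle I anticipate is precisely this last distributional fact — proving that $\mathcal{L}(T \mid T+T'=k)$ is stochastically increasing in $k$ — together with the monotonicity of the inner probability in $t$. For the former, the key computation is to show that $\Prob{T = t \mid T+T'=k} / \Prob{T = t \mid T+T'=k+1}$ is monotone in $t$; expanding via the convolution formula $\Prob{T+T'=k} = \sum_s \Prob{T=s}\Prob{T'=k-s}$, this reduces to the log-concavity of the sequences $s \mapsto \Prob{T=s}$ and $s\mapsto \Prob{T'=s}$, which holds because each is the distribution of a sum of independent Bernoullis (a classical fact, e.g.\ via Newton's inequalities for elementary symmetric polynomials with positive arguments). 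For the monotonicity of $\Prob{\bigcup_j E_j \mid \sum_{i\in V}B_i = t}$ in $t$, I would invoke Strassen/FKG-type monotone coupling of the conditioned vector $(B_i)_{i \in V}$ in $t$ — which is exactly the $|V|$-dimensional analogue of what we are ultimately trying to prove, but now the event $\bigcup_j E_j$ is a genuine up-set in $\{0,1\}^V$, so the increasing-coupling property (which, for up-sets, follows directly from log-concavity of the Bernoulli-sum weights without the full Strassen machinery) gives the conclusion. Assembling these pieces yields the inequality.
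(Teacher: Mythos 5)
There is a genuine gap, and it sits at exactly the crux of the statement. Your decomposition into $V=\bigcup_j A_j$ and its complement, and the stochastic-dominance step (b) for $\mathcal{L}(T\mid T+T'=k)$, are fine: the likelihood ratio $\Prob{T=t\mid T+T'=k+1}/\Prob{T=t\mid T+T'=k}$ is proportional to $\Prob{T'=k+1-t}/\Prob{T'=k-t}$, so log-concavity of the law of $T'$ (a sum of independent Bernoullis) indeed gives MLR and hence stochastic dominance. But your step (a) --- that $\Prob{\bigcup_j E_j \mid \sum_{i\in V}B_i=t}$ is increasing in $t$ --- is not a smaller or different problem: the event $E_1\cup\cdots\cup E_m$ was already an up-set in $\{0,1\}^n$ to begin with, and (a) is precisely the original proposition with ground set $V$ in place of $[n]$ (an up-set's conditional probability given the Bernoulli sum is nondecreasing in the sum). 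Your justification, that for up-sets the increasing coupling ``follows directly from log-concavity of the Bernoulli-sum weights without the full Strassen machinery,'' is an assertion, not an argument; as you yourself observed at the start, inclusion--exclusion over the $A_j$ introduces signs, so for $m\geq 2$ no termwise or one-line log-concavity computation delivers this. In effect you have reduced the statement to an instance of itself plus a correct but auxiliary MLR fact.

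What is missing is precisely the content that the paper supplies: a proof of Efron's monotonicity theorem, namely that for any coordinatewise-increasing $\phi_n$ on $\{0,1\}^n$ and independent Bernoullis, $\E{\phi_n(I_1,\dots,I_n)\mid \sum_i I_i=k}$ is nondecreasing in $k$. The paper proves this by induction on $n$: a two-coordinate lemma compares conditioning $X_n+I_{n+1}$ on $k$ versus $k+1$, where the only probabilistic input is log-concavity of the law of $X_n$ (Hoggar's theorem), and the induction step passes through $\psi_n(\sum_{i<n}I_i,I_n)=\E{\phi_n\mid \sum_{i<n}I_i,I_n}$ and the tower law; the proposition then follows by taking $\phi_n=\I{E_1\cup\cdots\cup E_m}$. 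If you supply a proof of your step (a) along these lines (or any independent proof of the up-set monotonicity), your $V$/$V^c$ decomposition becomes redundant --- you could apply that result directly to the original event --- so as written the proposal either is circular or collapses to the paper's argument once the gap is filled.
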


This is a corollary of the following result from Efron~\cite{Efron}.

\begin{prop}
Let $\phi_n: \{0,1\}^n \to \R_+$ be a function which is increasing in
all of its arguments.  Let $I_i$, $1\leq i \leq n$, be independent
Bernoulli random variables (not necessarily with the same parameter).
Then
\[
\E{\phi_n(I_1, I_2, \ldots, I_n) \Bigg| \sum_{i=1}^n I_i = k}
\leq \E{\phi_n(I_1, I_2, \ldots, I_n) \Bigg| \sum_{i=1}^n I_i = k+1},
\]
for all $0 \leq k \leq n-1$.
\end{prop}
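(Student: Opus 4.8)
The plan is to prove the final Proposition by induction on $n$, peeling off the last coordinate. First note that we may assume each $p_i := \Prob{I_i = 1}$ lies in $(0,1)$: a coordinate with $p_i \in \{0,1\}$ is a.s.\ constant, so fixing it and deleting it turns the $n$-variable problem into the $(n-1)$-variable problem for a still-increasing function (with $k$ shifted by $1$ if $p_i = 1$). With all $p_i \in (0,1)$, $S_m := \sum_{i=1}^m I_i$ has full support $\{0,1,\dots,m\}$, so all conditional expectations below are well defined. Write $p = p_n$, $q = 1-p$, $a_j = \Prob{S_{n-1} = j}$, and for $r \in \{0,1\}$ put $g_r(j) = \E{\phi_n(I_1,\dots,I_{n-1},r) \mid S_{n-1} = j}$. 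For each fixed $r$ the map $(i_1,\dots,i_{n-1}) \mapsto \phi_n(i_1,\dots,i_{n-1},r)$ is increasing in its $n-1$ arguments, so the inductive hypothesis gives that $j \mapsto g_r(j)$ is nondecreasing; and since $\phi_n$ is increasing in its last argument, $g_1(j) \ge g_0(j)$ for all $j$.

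Conditioning on $I_n$ and using its independence from $(I_1,\dots,I_{n-1})$,
\[
\E{\phi_n(I_1,\dots,I_n) \mid S_n = k} = \frac{q\,a_k\,g_0(k) + p\,a_{k-1}\,g_1(k-1)}{q\,a_k + p\,a_{k-1}},
\]
a weighted average of $g_0(k)$ and $g_1(k-1)$, and likewise with $k$ replaced by $k+1$. Clearing the (positive) denominators, the asserted inequality is equivalent to the nonnegativity of
\begin{align*}
&q^2 a_k a_{k+1}\bigl(g_0(k+1) - g_0(k)\bigr) + p^2 a_{k-1} a_k\bigl(g_1(k) - g_1(k-1)\bigr) \\
&\qquad {}+ pq\bigl[\,a_{k-1} a_{k+1}\bigl(g_0(k+1) - g_1(k-1)\bigr) + a_k^2\bigl(g_1(k) - g_0(k)\bigr)\bigr].
\end{align*}
The first two terms are $\ge 0$ by the inductive hypothesis. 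For the bracketed term I would invoke the log-concavity of the Poisson--binomial mass function, $a_{k-1}a_{k+1} \le a_k^2$ (each Bernoulli has a trivially log-concave law on $\{0,1\}$ and convolution preserves log-concavity; equivalently $\prod_i(q_i + p_i z)$ has only real roots, so its coefficients obey Newton's inequalities). If $g_1(k-1) \le g_0(k+1)$ the bracket is nonnegative termwise. Otherwise $0 \le g_1(k-1) - g_0(k+1) \le g_1(k) - g_0(k)$, using $g_1(k-1) \le g_1(k)$ and $g_0(k) \le g_0(k+1)$; combined with $0 \le a_{k-1}a_{k+1} \le a_k^2$ this shows the single negative contribution $a_{k-1}a_{k+1}\bigl(g_1(k-1) - g_0(k+1)\bigr)$ is dominated by $a_k^2\bigl(g_1(k) - g_0(k)\bigr)$.

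The base case $n = 1$ is trivial (only $k = 0$ is possible and there is nothing to compare), which closes the induction. The main obstacle is the last step: once the problem has been reduced to the displayed quadratic-in-the-$a_j$ inequality, one must recognise that its only possibly-negative contribution is controlled not by any single property in isolation but by the interplay of three facts — the pointwise bound $g_1 \ge g_0$, the monotonicity of $g_0$ and $g_1$ coming from the inductive hypothesis, and the log-concavity of the conditioning law. (One might instead try to build a direct monotone coupling of $(I \mid S_n = k)$ and $(I \mid S_n = k+1)$, but by Strassen's theorem that is logically equivalent to the Proposition itself, so it offers no shortcut.)
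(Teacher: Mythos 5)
Your proof is correct and follows essentially the same route as the paper's: peel off the last coordinate, use the inductive hypothesis to show that the conditional expectations $g_r(j)=\E{\phi_n(I_1,\dots,I_{n-1},r)\mid S_{n-1}=j}$ are nondecreasing in $j$, and control the one possibly negative cross term via log-concavity of the law of $S_{n-1}$ (the paper isolates this as a two-variable lemma and cites Hoggar for log-concavity, but the algebra and the three ingredients are identical). One tiny slip: the base case $n=1$ is not ``nothing to compare'' --- it asserts $\phi_1(0)\le\phi_1(1)$ --- but this is immediate from monotonicity, so the induction closes as claimed.
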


Since Efron's proof does not apply directly to the case of discrete
random variables, we give a proof here.

\begin{proof}
  First let $\psi: \Z_+^2 \to \R_+$ be a function which is increasing in
  both arguments.  Fix $n \geq 1$ and write $X_n = \sum_{i=1}^n I_i$.
  We will first prove that for $0 \leq k \leq n$,
\begin{equation}
\E{\psi(X_n, I_{n+1}) | X_n + I_{n+1} = k}
\leq \E{\psi(X_n, I_{n+1}) | X_n+I_{n+1} = k+1}. \label{eqn:m=2}
\end{equation}
Let $u(i) = \Prob{X_n = i}$ for $0 \leq i \leq n$.  We observe that,
as a sum of independent Bernoulli random variables,
$X_n$ is
\emph{log-concave}, that is
\begin{equation} \label{eqn:lc}
u(i)^2 \geq u(i-1)u(i+1), \quad i \geq 1.
\end{equation}
(This follows because any Bernoulli random variable is log-concave,
and the sum of independent log-concave random variables is itself
log-concave, as proved by Hoggar \cite{MatthewHoggard}).

Since $\psi(0,0) \leq \psi(1,0)$ and $\psi(0,0) \leq \psi(0,1)$, we
have
\begin{align*}
\E{\psi(X_n,I_{n+1}) | X_n + I_{n+1} = 0}
& = \psi(0,0) \\
& \leq  \frac{p_{n+1} u(0) \psi(1,0) + (1-p_{n+1}) u(1) \psi(0,1)}
             {p_{n+1} u(0) + (1 - p_{n+1})u(1)} \\
& = \E{\psi(X_n,I_{n+1}) | X_n + I_{n+1} = 1}.
\end{align*}
Similarly, since $\psi(n-1,1) \leq \psi(n,1)$ and $\psi(n,0) \leq
\psi(n,1)$, we have
\begin{align*}
\E{\psi(X_n, I_{n+1}) | X_n + I_{n+1} = n}
& = \frac{p_{n+1} u(n-1) \psi(n-1,1) + (1-p_{n+1}) u(n) \psi(n,0)}
         {p_{n+1} u(n-1) + (1-p_{n+1}) u(n)} \\
& \leq \psi(n,1) \\
& = \E{\psi(X_n, I_{n+1}) | X_n + I_{n+1} = n+1}.
\end{align*}
Suppose now that $1 \leq k \leq n-1$.  Then
\begin{align}
& \E{\psi(X_n, I_{n+1}) | X_n + I_{n+1} = k+1}
- \E{\psi(X_n, I_{n+1}) | X_n + I_{n+1} = k} \notag \\
& = \frac{p_{n+1} u(k) \psi(k,1) + (1-p_{n+1}) u(k+1) \psi(k+1,0)}
         {p_{n+1} u(k) + (1-p_{n+1}) u(k+1)}  \notag \\
& \qquad  - \frac{p_{n+1} u(k-1) \psi(k-1,1) + (1-p_{n+1}) u(k) \psi(k,0)}
         {p_{n+1} u(k-1) + (1-p_{n+1}) u(k)} \notag \\
& = \frac{p_{n+1}^2 u(k)u(k-1) [\psi(k,1) - \psi(k-1,1)]}{d} \notag \\
& \qquad
+ \frac{(1-p_{n+1})^2u(k) u(k+1) [\psi(k+1,0) - \psi(k,0)]}{d}
\notag \\
& \qquad + \frac{p_{n+1}(1-p_{n+1})
                 [u(k-1) u(k+1) \psi(k+1,0) - u(k)^2 \psi(k,0)]}{d}
\notag \\
& \qquad + \frac{p_{n+1}(1-p_{n+1} )
                 [u(k)^2 \psi(k,1) - u(k-1) u(k+1) \psi(k-1,1)]}{d},
\label{eqn:sum}
\end{align}
where the denominator $d$ is given by
\[
d = \big[p_{n+1} u(k) + (1-p_{n+1}) u(k+1)\big]\big[ p_{n+1} u(k-1) + (1-p_{n+1}) u(k)\big]
\]
and is clearly non-negative.  The first two terms in (\ref{eqn:sum})
are non-negative because $\psi$ is increasing.  The sum of the third
and fourth terms is bounded below by
\[
\frac{p_{n+1}(1-p_{n+1})}{d}
[u(k)^2 - u(k-1)u(k+1)] [\psi(k,1) - \psi(k,0)].
\]
This is non-negative by the log-concavity property (\ref{eqn:lc}).
Putting all of this together, we see that
\[
\E{\psi(X_n, I_{n+1}) | X_n + I_{n+1} = k}
\leq \E{\psi(X_n, I_{n+1}) | X_n+I_{n+1} = k+1},
\]
as required.

We now proceed by induction on $n$.  Note that (\ref{eqn:m=2}) with
$n=1$ gives the base case. Now define
\[
\psi_n\left(\sum_{i=1}^{n-1} I_i, I_n\right) = \E{\phi_n(I_1, I_2,
  \ldots, I_n) \Bigg| \sum_{i=1}^{n-1} I_i, I_n}.
\]
Assume that we have
\[
\E{\phi_{n-1}(I_1, I_2, \ldots, I_{n-1}) \Bigg| \sum_{i=1}^{n-1} I_i = k}
\leq \E{\phi_{n-1}(I_1, I_2, \ldots, I_{n-1}) \Bigg| \sum_{i=1}^{n-1} I_i = k+1}
\]
for $0 \leq k \leq n-2$.  By this induction hypothesis,
$\psi_n\left(\sum_{i=1}^{n-1} I_i, I_n\right)$ is increasing in its
first argument.  By the assumption that $\phi_n$ is increasing,
$\psi_n\left(\sum_{i=1}^{n-1} I_i, I_n\right)$ is increasing in its
second argument.  So by (\ref{eqn:m=2}),
\[
\E{\psi_n\left(\sum_{i=1}^{n-1} I_i, I_n\right) \Bigg|
  \sum_{i=1}^{n-1}I_i + I_n = k}
\leq \E{\psi_n\left(\sum_{i=1}^{n-1} I_i, I_n\right) \Bigg|
  \sum_{i=1}^{n-1}I_i + I_n = k+1}
\]
for $0 \leq k \leq n-1$.  But by the tower law, this says exactly that
\[
\E{\phi_n(I_1, I_2, \ldots, I_n) \Bigg| \sum_{i=1}^n I_i = k}
\leq \E{\phi_n(I_1, I_2, \ldots, I_n) \Bigg| \sum_{i=1}^n I_i = k+1}.
\]
The result follows by induction.
\end{proof}

\emph{Proof of Proposition~\ref{prop:cond}}.
Let $\phi_n(B_1, B_2, \ldots, B_n) = \I{E_1 \cup E_2 \cup \cdots \cup
  E_m}$.  Then $\phi_n$ is increasing in all of its arguments and so
by the previous proposition we have
\begin{align*}
\Prob{E_1 \cup E_2 \cup \cdots \cup E_m \Bigg| \sum_{i=1}^n B_i = k}
& = \E{\phi_n (X_1, X_2, \ldots, X_n) \Bigg| \sum_{i=1}^n B_i = k} \\
& \leq \E{\phi_n (X_1, X_2, \ldots, X_n) \Bigg| \sum_{i=1}^n B_i = k+1}\\
& = \Prob{E_1 \cup E_2 \cup \cdots \cup E_m \Bigg| \sum_{i=1}^n B_i =
  k+1}.
\end{align*}\hfill $\Box$

Note that we have only proved the existence of a coupling.  There is
no reason why it should be unique.  Indeed, in general, there is a
simplex of solutions.  For the example given in
Figure~\ref{fig:simplest}, the extremes of the one-parameter family of
solutions are shown in Figure~\ref{fig:2solns}.

\begin{figure}
\begin{center}
\psfrag{a}{$\frac{6}{11}$}
\psfrag{b}{$\frac{3}{11}$}
\psfrag{c}{$\frac{2}{11}$}
\psfrag{e}{$\frac{3}{6}$}
\psfrag{f}{$\frac{2}{6}$}
\psfrag{g}{$\frac{1}{6}$}
\psfrag{d}{$\frac{15}{66}$}
\psfrag{i}{$\frac{21}{66}$}
\psfrag{l}{$\frac{18}{66}$}
\psfrag{m}{$0$}
\psfrag{o}{$\frac{1}{66}$}
\psfrag{r}{$\frac{11}{66}$}
\psfrag{h}{$\frac{26}{66}$}
\psfrag{j}{$\frac{10}{66}$}
\psfrag{k}{$\frac{7}{66}$}
\psfrag{n}{$\frac{11}{66}$}
\psfrag{p}{$\frac{12}{66}$}
\psfrag{q}{$0$}
\includegraphics{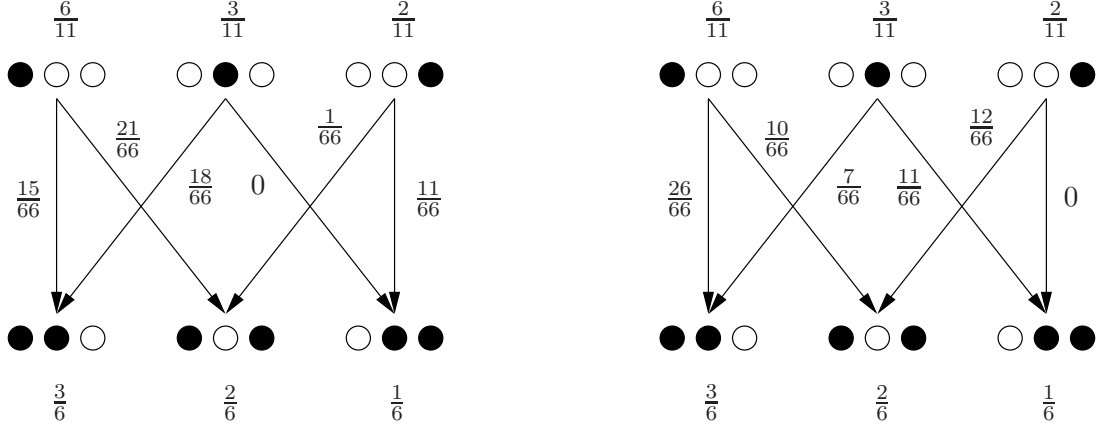}
\end{center}
\caption{The two extreme solutions to the coupling problem for $n=4$
  and $k=2,3$ (as drawn in Figure \ref{fig:simplest}).
The arrows are labelled with the joint probabilities
$\Prob{B^k=b,B^{k+1}=b'}$.}
\label{fig:2solns}
\end{figure}

\section{A recursive construction}
\label{sec:recursive}

In this section, we describe a more explicit construction of the Gibbs
fragmentation processes, which is recursive in $n$ and possesses a
certain consistency property as $n$ varies.

The basic principle is the following simple observation.  Suppose we
want to create a uniform random permutation of $[n]$ conditioned to
have $k$ cycles.  Then $n$ either forms a singleton, or is contained
in some cycle with other individuals.  If it forms a singleton, then
the rest of the permutation is a uniform random permutation of
$[n-1]$, conditioned to have $k-1$ cycles.  If, on the other hand, $n$
is not a singleton, then we take a uniform random permutation of
$[n-1]$ into $k$ cycles and insert $n$ into a uniformly chosen
position.

So we proceed as follows.  A Gibbs fragmentation process on $[n]$ for
$n=1$ or $n=2$ is trivial.

Suppose we have constructed a process $(\Pi^{n-1}_1,\Pi^{n-1}_2,\dots,
\Pi^{n-1}_{n-1})$ on $[n-1]$, with the required marginal distributions
and splitting properties.  We will derive from it a process
$(\Pi^{n}_1, \Pi^{n}_2,\dots, \Pi^{n}_n)$ on $[n]$.

For each $k$, the partition $\Pi_k^n$ of $[n]$ into $k$ parts will
come from either
\begin{itemize}
\item[(a)]
adding a singleton block $\{n\}$ to $\Pi_{k-1}^{n-1}$; or
\item[(b)]
adding the element $n$ to one of the blocks of $\Pi_{k}^{n-1}$,
by choosing an element $C_{n}$ uniformly at random from $[n-1]$ and
putting $n$ in the same block as $C_n$.
\end{itemize}

Note that
\[
\Prob{\{n\} \text{ is a singleton in }\Pi_k^{n}}
=
\Prob{B_n=1 \bigg| \sum_{i=1}^n B_i=k}
\]
which is increasing in $k$, by the monotonicity results proved in the previous section.
So let $K_n$ be a random variable whose distribution is given by
\[
\Prob{K_n\leq k}=\Prob{\{n\} \text{ is a singleton in }\Pi_k^{n}};
\]
now we take choice (a) above for all $k\geq K_n$, and choice (b)
for all $k<K_n$ (using the same value of $C_n$ for each such $k$).

A possible realisation of this recursive process is
illustrated in Figure~\ref{fig:table}.

\begin{figure}
\centering
\epsfig{figure=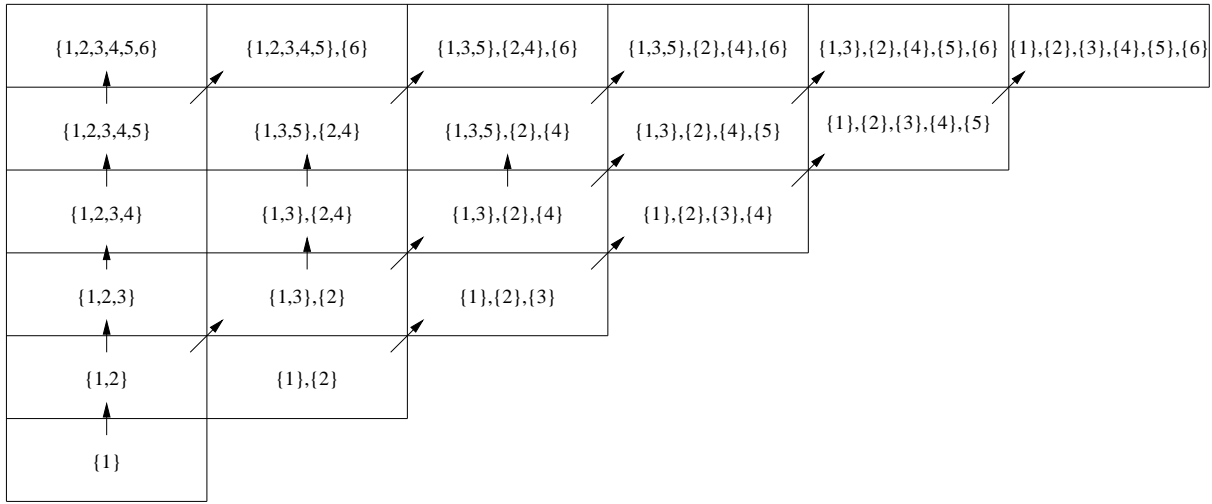, width=\linewidth}
\caption{A possible realisation of the coupling.}
\label{fig:table}
\end{figure}

\section{Partial extension to exchangeable Gibbs partitions}
\label{sec:extension}

At the beginning of this paper, we introduced the notion of a
partition of $[n]$ with Gibbs$(\bv,\bw)$ distribution.  In general,
there is no reason why these partitions should be consistent as $n$
varies.  That is, it is not necessarily the case that taking a
Gibbs$(\bv,\bw)$ partition of $[n+1]$ and deleting $n+1$ gives rise to a
Gibbs$(\bv,\bw)$ partition of $[n]$.  If, however, this \emph{is} the case,
we can define an exchangeable random partition $\Pi$ of $\N$ as the
limit of the projections onto $[n]$ as $n \to \infty$.  In this case,
we refer to $\Pi$ as an \emph{exchangeable Gibbs$(\bv,\bw)$
  partition}.

An important subfamily of the exchangeable Gibbs$(\bv,\bw)$ partitions
are the $(\alpha,\theta)$-partitions, whose asymptotic frequencies
have the Poisson-Dirichlet$(\alpha, \theta)$ distribution, for $0 \leq
\alpha < 1$ and $\theta > -\alpha$, or $\alpha < 0$ and $\theta =
m|\alpha|$, some $m \in \N$ (see Pitman and Yor~\cite{Pitman/Yor} or
Pitman~\cite{PitmanStFl} for a wealth of information about these
distributions).  Here, the corresponding weight sequences are
\[
w_j = (1-\alpha)_{j-1 \uparrow 1}, \quad v_{n,k} =
\frac{(\theta+\alpha)_{(k-1)\uparrow \alpha}}{(\theta + 1)_{(n-1)
    \uparrow 1}},
\]
where $(x)_{m \uparrow \beta} := \prod_{j=1}^{m} (x + (j-1) \beta)$
and $(x)_{0 \uparrow \beta} := 1$.  In the first part of this paper,
we have treated the case $\alpha = 0$: a $(0,1)$-partition of $[n]$
has the same distribution as the partition derived from the cycles of
a uniform random permutation.  In view of the fact that the array
$\bv$ does not influence the partition conditioned to have $k$ blocks,
we have also proved that a fragmentation process having the
distribution at time $k$ of a $(0,\theta)$-partition conditioned to
have $k$ blocks exists.  The Gibbs$(\bw)$ distribution corresponding
to the case $\alpha = -1$ (i.e.\ to weight sequence $w_j = j!$) is
discussed in Berestycki and Pitman~\cite{Berestycki/Pitman}; in
particular, it is known that a Gibbs$(\bw)$ fragmentation exists in
this case.

Gnedin and Pitman~\cite{Gnedin/Pitman} have proved that the
exchangeable Gibbs$(\bv,\bw)$ partitions all have $\bw$-sequences of
the form
\[
w_j = (1-\alpha)_{j-1 \uparrow 1}
\]
for some $-\infty \leq \alpha \leq 1$, where for $\alpha = -\infty$
the weight sequence is interpreted as being identically equal to 1 for
all $j$.  Furthermore, the array $\bv$ must solve the backward
recursion
\[
v_{n,k} = \gamma_{n,k} v_{n+1,k} + v_{n+1,k+1}, \quad 1 \leq k \leq n,
\]
where $v_{1,1} = 1$ and
\begin{equation}\label{gammadef}
\gamma_{n,k} =
\begin{cases}
n-\alpha k & \text{if $-\infty < \alpha < 1$} \\
k & \text{if $\alpha = -\infty$}.
\end{cases}
\end{equation}
The case $\alpha = 1$ corresponds to the trivial partition
into singletons and will not be discussed any further.

It seems natural to consider the question of whether Gibbs
fragmentations exist for other weight sequences falling into the
exchangeable class.  We will give here a partial extension of our
results to the case of a general $\alpha \in [-\infty,1)$.

Fix $n$ and consider a Gibbs$(\bv,\bw)$ partition of $[n]$ with $w_j =
(1 - \alpha)_{j-1 \uparrow 1}$ for some $\alpha \in [-\infty,1)$.  Let
$B_i$ be the indicator function of the event that $i$ is the smallest
element in its block, for $1 \leq i \leq n$, and let $K_n =
\sum_{i=1}^{n} B_i$, the number of blocks.  Then we have the following
extension of the earlier Proposition \ref{couplingprop}.

\begin{prop}
\label{alphathetaprop}
For each $n$, there exists a
random sequence of vectors
$\left(B_1^{k,n}, \dots, B_n^{k,n}\right)$, $k=1,2,\dots,n$
such that for each $k$,
\[
\left(B_1^{k,n}, \dots, B_n^{k,n}\right)
\equidist
\left(B_1,\dots, B_n | K_n=k\right),
\]
and such that $B_i^{n,k+1}\geq B_i^{n,k}$ for all $k$.
\end{prop}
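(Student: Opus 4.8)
The plan is to reduce the statement to the single claim that a certain probability is monotone in $k$, and then to establish that claim. Write $w_j=(1-\alpha)_{j-1\uparrow 1}$, $K_n=\sum_{i=1}^n B_i$, and set $\beta_k:=\Prob{B_n=1\mid K_n=k}$, the probability that $\{n\}$ is a singleton block of the Gibbs$(\bv,\bw)$ partition given that it has $k$ blocks. First I would pin down the conditional law. Build the partition by the sequential ``Chinese restaurant'' rule: elements $1,\dots,n$ arrive in order; element $i$ either opens a new block (so $B_i=1$) or joins an existing block of current size $c$ with weight proportional to $w_{c+1}/w_c=c-\alpha$. The algebraic point is that the total join weight $\sum_A(|A|-\alpha)$ of a partition into $j$ blocks equals (number of placed elements)$-\,j\alpha$, depending only on $j$. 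Hence the conditional law of $(B_1,\dots,B_n)$ given $K_n=k$ does not depend on $\bv$, and $(M_i:=\sum_{j\le i}B_j)_i$ is a Markov bridge from $M_0=0$ to $M_n=k$; the same identity yields the two-term recursion $B_{n,k}(\bw)=B_{n-1,k-1}(\bw)+\gamma_{n-1,k}\,B_{n-1,k}(\bw)$ with $\gamma_{m,k}$ as in \eqref{gammadef}, whence $\beta_k=B_{n-1,k-1}(\bw)/B_{n,k}(\bw)$.

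Next, the reduction. Conditioning on $M_{n-1}$ and using the Markov property gives exactly the recursive picture of Section~\ref{sec:recursive}: given $K_n=k$, with probability $\beta_k$ the block $\{n\}$ is a singleton and $(B_1,\dots,B_{n-1})\equidist(B_1,\dots,B_{n-1}\mid K_{n-1}=k-1)$, and with probability $1-\beta_k$ the element $n$ has joined some block and $(B_1,\dots,B_{n-1})\equidist(B_1,\dots,B_{n-1}\mid K_{n-1}=k)$. Assuming inductively that $(B^{k,n-1})_k$ can be coupled monotonically and that $\beta_k$ is nondecreasing in $k$, take a threshold variable with $\Prob{K_n\le k}=\beta_k$, use the ``$\{n\}$ singleton'' alternative for $k\ge K_n$ and the ``$n$ joins a block'' alternative (driven by a single auxiliary uniform variable) for $k<K_n$: the only crossing, from $k=K_n-1$ to $k=K_n$, flips $B_n$ from $0$ to $1$ while leaving the first $n-1$ coordinates unchanged, so monotonicity is preserved. (Equivalently, one runs the Efron-type induction of Section~\ref{sec:existence} with ``$B_n$ independent of the rest'' replaced by ``$B_n$ conditionally independent of the rest given $M_{n-1}$'': writing $\E{\phi_n(B)\mid K_n=k}=\beta_k\,G(k-1)+(1-\beta_k)\,H(k)$ for $G,H$ the conditional expectations of $\phi_n(\cdot,1),\phi_n(\cdot,0)$, one has $G,H$ nondecreasing by the induction hypothesis and $G\ge H$ pointwise, and an elementary rearrangement shows monotonicity in $k$ exactly when $\beta_k$ is nondecreasing; Strassen's theorem then applies as in the proof of Proposition~\ref{couplingprop}.)

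Everything therefore reduces to showing that $\beta_k=B_{n-1,k-1}(\bw)/B_{n,k}(\bw)$ is nondecreasing in $k$. Substituting the two-term recursion, this is equivalent to the ``$\gamma$-weighted log-concavity'' statement
\[
\gamma_{n-1,k+1}\,B_{n-1,k-1}(\bw)\,B_{n-1,k+1}(\bw)\ \le\ \gamma_{n-1,k}\,\bigl(B_{n-1,k}(\bw)\bigr)^2 .
\]
For $\alpha=0$ this is the classical log-concavity of the unsigned Stirling numbers of the first kind and for $\alpha=-\infty$ a strengthening of that of the Stirling numbers of the second kind; more generally, for $\alpha\le 0$ the polynomial $\sum_k B_{n,k}(\bw)x^k$ has only real roots (preserved under the recursion by a Laguerre-type argument), so Newton's inequalities give $B_{n-1,k-1}(\bw)B_{n-1,k+1}(\bw)\le\tfrac{k}{k+1}\bigl(B_{n-1,k}(\bw)\bigr)^2$, which suffices because $\gamma_{n-1,k}/\gamma_{n-1,k+1}\ge k/(k+1)$ whenever $\alpha\le 0$. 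The real obstacle is the range $\alpha\in(0,1)$: there $\sum_k B_{n,k}(\bw)x^k$ need not be real-rooted and $(B_{n,k}(\bw))_k$ need not even be log-concave, so the displayed inequality — which, since $\gamma_{n-1,k+1}<\gamma_{n-1,k}$, is genuinely weaker than log-concavity — must be proved directly. I would attack it by induction on $n$, inserting $B_{n-1,j}(\bw)=B_{n-2,j-1}(\bw)+\gamma_{n-2,j}B_{n-2,j}(\bw)$ and the inductive hypothesis at indices $k-1$ and $k$, and using that wherever $\gamma_{n-2,j}<0$ one has $B_{n-2,j}(\bw)=0$ so that all quantities remain non-negative. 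The delicate point — and the step I expect to be hardest — is that a crude substitution of the inductive hypothesis is too lossy: the slack in the hypothesis at size $n-1$ is precisely what carries the inequality at size $n$, so the induction must either track this slack quantitatively or be run with a suitably strengthened hypothesis. Once this is done, the first two paragraphs complete the proof.
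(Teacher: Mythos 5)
Your first two paragraphs reproduce the paper's reduction: the threshold/recursive coupling of Section~\ref{sec:recursive} reduces Proposition~\ref{alphathetaprop} to the monotonicity in $k$ of $\Prob{B_n=1\mid K_n=k}=S_\alpha(n-1,k-1)/S_\alpha(n,k)$ (the paper's Lemma~\ref{monotonicitylemma}), equivalently to the weighted inequality $\gamma_{n-1,k+1}S_\alpha(n-1,k-1)S_\alpha(n-1,k+1)\le\gamma_{n-1,k}S_\alpha(n-1,k)^2$ that you display. For $\alpha\le 0$ your route — real-rootedness of the row polynomial plus Newton's inequalities — is genuinely different from the paper, which proves this range by a bare-hands induction on $n$ using the recursion; your route is attractive and can be made rigorous: with $c=|\alpha|$ the recursion gives $P_{n+1}(x)=(n+x)P_n(x)+c\,xP_n'(x)$, which on each half-line equals $c\,|x|^{1-n/c}e^{-x/c}\tfrac{d}{dx}\bigl[|x|^{n/c}e^{x/c}P_n(x)\bigr]$ up to sign, and a Rolle count on $(-\infty,0]$ (Harper's classical argument when $\alpha=-\infty$) shows all $n+1$ zeros of $P_{n+1}$ are real; your comparison $\gamma_{n-1,k}/\gamma_{n-1,k+1}\ge k/(k+1)$ is correct for $\alpha\le0$. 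But you must actually write out that preservation step rather than invoke ``a Laguerre-type argument''.

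The genuine gap is the range $\alpha\in(0,1)$, which you explicitly leave open — and you have the difficulty inverted. For $\alpha\in(0,1)$ the weight $\gamma_{n-1,k}=n-1-\alpha k$ is \emph{decreasing} in $k$, so your weighted inequality follows from plain log-concavity of $(S_\alpha(n-1,k))_k$, which does hold: the paper obtains it from Sagan's theorem for triangular arrays $t_{n,k}=c_{n,k}t_{n-1,k-1}+d_{n,k}t_{n-1,k}$, applied with $c_{n,k}=1$ and $d_{n,k}=n-1-\alpha k$ (a nonnegative sequence linear in $k$ is log-concave in $k$ and satisfies the cross condition with equality). Your assertion that for $\alpha\in(0,1)$ the row ``need not even be log-concave'' is unsupported (small cases such as $\alpha=1/2$ are log-concave), and your proposed repair — an induction that ``tracks the slack'' in the hypothesis — is not carried out, so as written the proof is incomplete exactly on this range. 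The range where real work is needed is $\alpha<0$, where $\gamma_{n,k}$ increases in $k$ and the weighted inequality is strictly stronger than log-concavity; there your Newton argument does buy a shorter proof than the paper's induction, but it cannot substitute for the missing (and in fact easy) case $\alpha\in(0,1)$.
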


This is precisely what we proved earlier for the case $\alpha=0$ (and,
indeed, for more general sequences of independent Bernoulli random
variables).
Namely we show that Gibbs$(\bw)$ partitions of $[n]$, conditioned to have
$k$ blocks, can be coupled over $k$ in such a way that the set of elements which
are the smallest in their block is increasing in $k$.
In the context of random permutations, this
was enough to prove the existence of a coupling of
\textit{partitions} with the desired fragmentation property, using the
fact that the random variables $C_i$ in the Chinese restaurant
process, which govern the table joined by each arriving customer, were
independent of each other and from the random variables $B_i$.  For
$\alpha \neq 0$, this is no longer the case 
and we cannot deduce the full result for partitions from
this result for ``increments'' or ``records''.
In fact, this stronger result is not always true, for example when $\alpha=-\infty$
and so $w_j=1$ for all $j$.
In this case it is known that
no Gibbs fragmentation process exists for $n=20$
and for all large enough $n$ (see for example \cite{Berestycki/Pitman} for a discussion).
By a continuity argument, one can show similarly that if
$-\alpha$ is sufficiently large, then for certain $n$
no Gibbs fragmentation process exists.

As in Section \ref{sec:recursive},
Proposition \ref{alphathetaprop} can be proved by
induction over $n$. Suppose we have carried out the construction
for $n-1$, and wish to extend to $n$.
Conditioned on $K_n=k$, we need to consider two cases:
either $B_n=1$ and $K_{n-1}=k-1$, or $B_n=0$ and $K_n=k$.
Depending on which of these cases we choose, we will set either
\begin{align*}
\left(B_1^{n,k},\dots,B_n^{n,k}\right)
&=
\left(B_1^{n-1,k-1},\dots,B_{n-1}^{n-1,k-1},1\right)
\\
\intertext{or}
\left(B_1^{n,k},\dots,B_n^{n,k}\right)
&=
\left(B_1^{n-1,k},\dots,B_{n-1}^{n-1,k},0\right).
\end{align*}

Precisely as in Section \ref{sec:recursive},
this can be made to work successfully provided that the following lemma holds.

\begin{lemma}
\label{monotonicitylemma}
For any $n$, the probability $\Prob{B_n=1|K_n=k}$
is increasing in $k$.
\end{lemma}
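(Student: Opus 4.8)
The plan is to translate the statement into a weighted log-concavity property of the generalised Stirling numbers $B_{n,k}(\bw)$ (I suppress $\bw$ below). First I would record the recursion obtained by splitting a partition of $[n]$ into $k$ blocks according to whether $\{n\}$ is one of them:
\[
B_{n,k} \;=\; B_{n-1,k-1} \;+\; \gamma_{n-1,k}\, B_{n-1,k},
\]
which holds because $w_{m+1}/w_m = m-\alpha$ for $-\infty<\alpha<1$ (and $=1$ for $\alpha=-\infty$) and the $k$ remaining block sizes sum to $n-1$, so the coefficient is $\gamma_{n-1,k}$ as in \eqref{gammadef}. Since $B_n=1$ precisely when $\{n\}$ is a block, $\Prob{B_n=1\mid K_n=k}=B_{n-1,k-1}/B_{n,k}$, and the recursion gives
\[
\Prob{B_n=1\mid K_n=k}^{-1} \;=\; 1 + \gamma_{n-1,k}\,\frac{B_{n-1,k}}{B_{n-1,k-1}} .
\]
Hence (the $k=1$ case being trivial, as $\{n\}$ is never a block when $K_n=1$) the Lemma is equivalent to $k\mapsto \gamma_{m,k}B_{m,k}/B_{m,k-1}$ being non-increasing for every $m$, that is, clearing denominators, to the $\gamma$-weighted log-concavity
\[
(\star)\qquad \gamma_{m,k}\,B_{m,k}^2 \;\ge\; \gamma_{m,k+1}\,B_{m,k-1}B_{m,k+1},\qquad 1\le k\le m-1 .
\]

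I would prove $(\star)$ by induction on $m$ (the cases $m\le 2$ being trivial). Substituting the recursion into the three factors of $(\star)$ turns it into a quadratic inequality in $B_{m-1,k-2},B_{m-1,k-1},B_{m-1,k},B_{m-1,k+1}$. Writing $\varepsilon_j := \gamma_{m-1,j}B_{m-1,j}^2-\gamma_{m-1,j+1}B_{m-1,j-1}B_{m-1,j+1}\ge 0$ (the slack in the inductive hypothesis), and using these relations to eliminate the products $B_{m-1,k-2}B_{m-1,k}$, $B_{m-1,k-1}B_{m-1,k+1}$ and $B_{m-1,k-2}B_{m-1,k+1}$, the difference of the two sides of $(\star)$ simplifies, after some algebra, to
\[
\frac{\gamma_{m,k}\gamma_{m-1,k}-\gamma_{m,k+1}\gamma_{m-1,k-1}}{\gamma_{m-1,k}}\,B_{m,k}^2 \;+\; (\text{terms linear and bilinear in }\varepsilon_{k-1},\varepsilon_k),
\]
where the $\varepsilon$-terms are non-negative --- the only negative one, a multiple of $\varepsilon_{k-1}\varepsilon_k/(B_{m-1,k-1}B_{m-1,k})$, being dominated by a single positive term via $\varepsilon_k\le \gamma_{m-1,k}B_{m-1,k}^2$. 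A direct computation gives the arithmetic identity $\gamma_{m,k}\gamma_{m-1,k}-\gamma_{m,k+1}\gamma_{m-1,k-1}=-\alpha(1-\alpha)$ for finite $\alpha$ (and $=1$ for $\alpha=-\infty$).

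For $\alpha\in[-\infty,0]$ this quantity is non-negative, so the induction closes and $(\star)$, hence the Lemma, follows. (An alternative for $\alpha\le 0$: the polynomial $P_m(x):=\sum_k B_{m,k}x^k$ satisfies $P_m=(x+m-1)P_{m-1}-\alpha x P_{m-1}'$, whence a Rolle/interlacing argument shows $P_m$ has only real roots; Newton's inequalities then give even the ultra-log-concavity of $(B_{m,k})_k$, and $(\star)$ follows from the elementary bound $\gamma_{m,k}(m-k+1)(k+1)\ge\gamma_{m,k+1}\,k(m-k)$. This $P_m$ is not real-rooted once $\alpha\in(0,1)$, so this shortcut does not extend.)

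The case $\alpha\in(0,1)$ is the main obstacle: there $-\alpha(1-\alpha)<0$, so the first term above is a negative multiple of $B_{m,k}^2$ which has to be absorbed into the $\varepsilon$-slack. To make the induction go through one must strengthen the hypothesis to a quantitative form of $(\star)$, namely $\gamma_{m,k}B_{m,k}^2-\gamma_{m,k+1}B_{m,k-1}B_{m,k+1}\ge c_{m,k}B_{m,k}^2$ for a suitable explicit $c_{m,k}>0$; small-$n$ computations show the actual slack is comfortably larger than the $\alpha(1-\alpha)/\gamma_{m-1,k}$ deficit (the $B_{m,k}$ being, in this regime, considerably more than log-concave), so such a bound holds, but choosing a $c_{m,k}$ that is reproduced by the recursion is the delicate point, and is where I expect to spend most of the effort. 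Finally, $\alpha=-\infty$ may, if one prefers, be recovered by letting $\alpha\to-\infty$ once $(\star)$ is known for finite $\alpha$.
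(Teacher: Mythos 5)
Your reduction is exactly the one used in the paper: $\Prob{B_n=1\mid K_n=k}=S_\alpha(n-1,k-1)/S_\alpha(n,k)$ together with the recursion $S_\alpha(n,k)=S_\alpha(n-1,k-1)+\gamma_{n-1,k}S_\alpha(n-1,k)$ turns the lemma into the weighted log-concavity $(\star)$, i.e.\ $\gamma_{m,k}S_\alpha(m,k)^2\ge\gamma_{m,k+1}S_\alpha(m,k-1)S_\alpha(m,k+1)$, and your induction on $m$ for $\alpha\in[-\infty,0]$, driven by the identity $\gamma_{m,k}\gamma_{m-1,k}-\gamma_{m,k+1}\gamma_{m-1,k-1}=-\alpha(1-\alpha)\ge 0$, is in substance the paper's argument for $-\infty<\alpha<0$ (the paper organises the slack into three bracketed terms rather than your $\varepsilon_j$ bookkeeping, and treats $\alpha=-\infty$ by the same computation rather than by a limit, but these are cosmetic differences).

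The genuine gap is the case $0<\alpha<1$, which you explicitly leave unresolved: the lemma is asserted for all $\alpha<1$, and ``small-$n$ computations show the slack is comfortably larger'' plus an unspecified strengthened hypothesis $c_{m,k}$ is not a proof. The idea you are missing is that for $\alpha\ge 0$ you should not try to push the weighted induction at all: since $\gamma_{m,k}=m-\alpha k$ is \emph{decreasing} in $k$ in this regime, plain log-concavity $S_\alpha(m,k)^2\ge S_\alpha(m,k-1)S_\alpha(m,k+1)$ already implies $(\star)$ (bound $\gamma_{m,k+1}$ above by $\gamma_{m,k}$). Plain log-concavity in turn follows from Sagan's theorem on triangular arrays satisfying $t_{n,k}=c_{n,k}t_{n-1,k-1}+d_{n,k}t_{n-1,k}$ with log-concave coefficients obeying the cross condition $c_{n,k-1}d_{n,k+1}+c_{n,k+1}d_{n,k-1}\le 2c_{n,k}d_{n,k}$; here $c_{n,k}=1$ and $d_{n,k}=n-1-\alpha k$ satisfy these (with equality in the cross condition), the only caveat being that $d_{n,k}$ is not integer-valued, which is harmless since Sagan's argument extends verbatim. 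With that observation the case $\alpha\in[0,1)$ is immediate and no quantitative strengthening of $(\star)$ is needed; your real-rootedness shortcut is then correctly confined to $\alpha\le 0$, where it is anyway redundant given your induction.
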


Equivalently, we are showing that the expected number of singletons
in $\Pi_n$, conditioned on $K_n=k$, is increasing in $k$
(since, by exchangability, the probability that $n$ is a singleton
in $\Pi_n$ is the same as the probability that $i$ is a singleton,
for any $i\in[n]$).

The rest of this section is devoted to the proof of this lemma.

Define $S_\alpha(n,k) = B_{n,k}(\bw)$.  Then
\begin{equation}
\label{Sdef}
\Prob{K_n=k}= v_{n,k} S_\alpha(n,k),
\end{equation}
where the $S_\alpha$ obey the recursion
\begin{equation}
\label{Srec}
S_\alpha(n+1,k)= \gamma_{n,k} S_\alpha(n,k) + S_\alpha(n,k-1),
\end{equation}
with boundary conditions $S_\alpha(1,1)=1$, $S_\alpha(n,0)=0$ for all
$n$, and $S_\alpha(n,n+1)$=0 for all $n$.  These are \emph{generalized
  Stirling numbers}.  We have
\[
\Prob{B_n=1|K_n=k} = \Prob{K_{n-1} = k-1 | K_n = k} =
\frac{S_{\alpha}(n-1,k-1)}{S_{\alpha}(n,k)}
\]
(see Section 3 of \cite{Gnedin/Pitman} for further details).
Using the recursion, this is equal to
\[
\left( 1 + \frac{ \gamma_{n-1,k} S_{\alpha}(n-1,k)}{S_{\alpha}(n-1,k-1)}
\right)^{-1}.
\]
For this to be increasing in $k$, it is equivalent that
\[
\frac{S_{\alpha}(n-1,k-1)}{\gamma_{n-1,k} S_{\alpha}(n-1,k)}
\]
should be increasing in $k$.  This is implied by the following
proposition.

\begin{proposition}
For all $\alpha < 1$ and all $n$ and $k$,
\[
\gamma_{n,k} S_\alpha(n,k)^2 \geq \gamma_{n,k+1} S_\alpha(n,k+1)S_\alpha(n,k-1).
\]
\end{proposition}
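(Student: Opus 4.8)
The plan is to prove the inequality --- call it $(\star_n)$ --- by induction on $n$, after disposing of $\alpha=-\infty$ as a limiting case of finite $\alpha$. For the reduction, set $\hat S_\alpha(n,k)=S_\alpha(n,k)/(-\alpha)^{n-k}$ for $\alpha<0$; then (\ref{Srec}) becomes $\hat S_\alpha(n+1,k)=(n/(-\alpha)+k)\hat S_\alpha(n,k)+\hat S_\alpha(n,k-1)$, which converges entrywise as $\alpha\to-\infty$ to the defining recursion of $S_{-\infty}$, and $(\star_n)$ for $\alpha=-\infty$ is correspondingly the $\alpha\to-\infty$ limit of $(\star_n)$ for finite $\alpha$. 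So we may assume $-\infty<\alpha<1$; then $\gamma_{n,k}=n-\alpha k\ge n(1-\alpha)>0$ and $S_\alpha(n,k)>0$ for $1\le k\le n$, so everything in sight is strictly positive. It is also worth recording an equivalent form: since $\gamma_{n,k}S_\alpha(n,k)=S_\alpha(n+1,k)-S_\alpha(n,k-1)$ by (\ref{Srec}), after cancelling the common term $S_\alpha(n,k)S_\alpha(n,k-1)$ the statement $(\star_n)$ is \emph{equivalent} to
\[
S_\alpha(n+1,k)\,S_\alpha(n,k)\ \ge\ S_\alpha(n+1,k+1)\,S_\alpha(n,k-1),
\]
i.e.\ to the assertion that $S_\alpha(n+1,k)/S_\alpha(n,k-1)$ is non-increasing in $k$; this is exactly Lemma~\ref{monotonicitylemma} for $n+1$, and it exhibits the proposition as a total-positivity-type statement about the triangle $(S_\alpha(m,k))$.

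For the inductive step I would substitute (\ref{Srec}) together with $\gamma_{n+1,j}=\gamma_{n,j}+1$ into $(\star_{n+1})$, abbreviating $a_j=S_\alpha(n,j)$ and $g_j=\gamma_{n,j}$ (so $g_{k\pm1}=g_k\mp\alpha$); the claim to prove becomes the single polynomial inequality
\[
(g_k+1)\bigl(g_ka_k+a_{k-1}\bigr)^2\ \ge\ (g_{k+1}+1)\bigl(g_{k+1}a_{k+1}+a_k\bigr)\bigl(g_{k-1}a_{k-1}+a_{k-2}\bigr).
\]
The hypothesis $(\star_n)$ supplies $g_ja_j^2\ge g_{j+1}a_{j+1}a_{j-1}$ for every $j$; applying this at $j=k$ and $j=k-1$ and chaining the two also gives $g_{k+1}a_{k+1}a_{k-2}\le g_{k-1}a_ka_{k-1}$. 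Using these to eliminate first every monomial containing $a_{k+1}$ and then the monomial $a_ka_{k-2}$, the difference (left side minus right side) turns out to equal a combination, with non-negative coefficients, of the inductive-hypothesis inequalities, \emph{plus} the quantity $\frac{\alpha(\alpha-1)}{g_k}\bigl(g_ka_k+a_{k-1}\bigr)^2=-\frac{\alpha(1-\alpha)}{\gamma_{n,k}}S_\alpha(n+1,k)^2$. Since $\alpha(1-\alpha)\le0$ precisely when $\alpha\le0$, this already proves $(\star_{n+1})$ for every $\alpha\le0$ --- and the case $\alpha=0$ is self-sustaining and recovers the classical log-concavity of the Stirling numbers of the first kind.

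The step I expect to be the main obstacle is closing the argument when $0<\alpha<1$, where the extra term $-\frac{\alpha(1-\alpha)}{\gamma_{n,k}}S_\alpha(n+1,k)^2$ is genuinely negative and must be absorbed. This is not merely an artefact of a crude allocation of the inductive inequalities: ordinary row log-concavity of $S_\alpha(n,\cdot)$ can itself fail when $\alpha$ is close to $1$, so the bare inequality $(\star_n)$ cannot be propagated unchanged. The remedy is to carry a strengthened induction hypothesis, and identifying the right one is the delicate point. One candidate is suggested by the bidiagonal structure of (\ref{Srec}): $S_\alpha$ is obtained from $S_\alpha(1,1)=1$ by successively right-multiplying by the non-negative upper-bidiagonal matrices with diagonal $(\gamma_{m,k})_k$ and all super-diagonal entries $1$, each of which is totally positive, so every truncation of the triangle is a product of totally positive matrices; one then tries to carry along the corresponding minor inequalities and recover $(\star_n)$ from them via (\ref{Srec}). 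An alternative is to carry $(\star_n)$ jointly with a sharper, $\gamma$-weighted companion --- essentially the log-concavity of $S_\alpha(n,k)/\prod_{j\le k}\gamma_{n,j}$ with a controlled amount of slack --- chosen so that the slack is exactly what is needed to swallow the $\alpha(1-\alpha)$ over-count in the step. Pinning down a single such strengthening that holds at the base case $n=1$ and is self-propagating uniformly over all $\alpha\in[-\infty,1)$ is where the real work lies; granting it, only the lengthy but routine algebra of the previous paragraph remains.
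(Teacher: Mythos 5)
Your treatment of $\alpha\le 0$ is correct and is essentially the paper's own argument for $-\infty<\alpha<0$: the paper performs the same induction on $n$, splitting the difference into three brackets and using the identity $(n+1-\alpha k)(n-\alpha k)-(n+1-\alpha(k+1))(n-\alpha(k-1))=\alpha(\alpha-1)$ together with the inductive hypothesis at $k$, at $k-1$, and in the chained form $\gamma_{n,k-1}S_\alpha(n,k)S_\alpha(n,k-1)\ge\gamma_{n,k+1}S_\alpha(n,k+1)S_\alpha(n,k-2)$. Your bookkeeping checks out: applying the hypothesis at $k-1$ in the form $S_\alpha(n,k)S_\alpha(n,k-2)\le\frac{\gamma_{n,k-1}}{\gamma_{n,k}}S_\alpha(n,k-1)^2$ makes the total leftover exactly $\frac{\alpha(\alpha-1)}{\gamma_{n,k}}S_\alpha(n+1,k)^2$, which is marginally cleaner than the paper's version of the third bracket (the paper instead invokes unweighted log-concavity there, which is why its induction is confined to $\alpha<0$). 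Your limiting argument for $\alpha=-\infty$ and the observed equivalence of the proposition with Lemma~\ref{monotonicitylemma} at $n+1$ are also fine.

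The genuine gap is the case $0<\alpha<1$, which the statement covers and which you explicitly leave open: you name two candidate strengthenings (minor inequalities from the totally positive bidiagonal factorization, or a $\gamma$-weighted companion with slack) but carry out neither, so as submitted this is not a proof of the proposition. The paper closes this regime by a different, short route: for $\alpha\ge0$ the weight $\gamma_{n,k}=n-\alpha k$ is non-increasing in $k$, so the weighted inequality follows from plain log-concavity of $k\mapsto S_\alpha(n,k)$, which the paper derives from (an extension of) Sagan's theorem on recursively defined triangles. That said, your side remark that row log-concavity can fail for $\alpha$ close to $1$ is correct: for $n=3$ one has $S_\alpha(3,1)=(1-\alpha)(2-\alpha)$, $S_\alpha(3,2)=3(1-\alpha)$, $S_\alpha(3,3)=1$, and $S_\alpha(3,2)^2<S_\alpha(3,1)S_\alpha(3,3)$ whenever $\alpha>7/8$, even though the weighted inequality itself still holds there. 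So the paper's reduction to log-concavity cannot be valid on all of $[0,1)$ as stated (the appeal to Sagan's theorem breaks down near the boundary, where $\gamma_{n-1,k}<0$ for $k$ close to $n$), and your diagnosis of where the real difficulty sits is sound; but the fact remains that your proposal proves the claim only for $\alpha\in[-\infty,0]$, and the interval $(0,1)$ — particularly $\alpha$ near $1$, where neither your sketch nor the paper's log-concavity route suffices — is left unresolved.
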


\begin{proof}
  If $0 \leq \alpha < 1$ then it is sufficient to prove the statement
  that $(S_{\alpha}(n,k), 0 \leq k \leq n)$ is log-concave, that is
\[
S_{\alpha}(n,k)^2 \geq S_{\alpha}(n,k-1) S_{\alpha}(n,k+1),
\]
since in that case $\gamma_{n,k}$, defined at (\ref{gammadef}), is decreasing in $k$. 
Theorem 1 of Sagan~\cite{Sagan} states that whenever $t_{n,k}$ is a
triangular array satisfying
\[
t_{n,k} = c_{n,k} t_{n-1,k-1} + d_{n,k} t_{n-1,k}
\]
for all $n \geq 1$, where $t_{n,k}$, $c_{n,k}$ and $d_{n,k}$ are all
integers and such that
\begin{itemize}
\item $c_{n,k}$ and $d_{n,k}$ are log-concave in $k$,
\item $c_{n,k-1} d_{n,k+1} + c_{n,k+1} d_{n,k-1} \leq 2 c_{n,k}
  d_{n,k}$ for all $n \geq 1$,
\end{itemize}
then $t_{n,k}$ is log-concave in $k$.  These conditions are clearly
satisfied for the generalized Stirling numbers $S_{\alpha}(n,k)$, with the
exception that the sequence $d_{n,k}$ is not integer-valued.  However,
Sagan's argument extends immediately to this case also, and so we will
not give a proof here.

We turn now to the case $-\infty < \alpha < 0$.  We proceed by
induction on $n$.  For $n=2$ the statement is trivial.  Suppose that
we have
\[
(n - \alpha k) S_{\alpha}(n,k)^2 \geq (n- \alpha(k+1)) S_{\alpha}(n,k-1) S_{\alpha}(n,k+1),
\]
for $0 \leq k \leq n$.  Using the recurrence
\[
S_{\alpha}(n+1,k) = (n - \alpha k) S_{\alpha}(n,k) + S_{\alpha}(n,k-1),
\]
we obtain that
\begin{align}
& (n+1 - \alpha k) S_{\alpha}(n+1,k)^2 - (n+1 - \alpha (k+1)) S_{\alpha}(n+1,
k-1)S_{\alpha}(n+1,k+1) \notag \\
& = \Big\{ (n+1 - \alpha k) (n- \alpha k)^2 S_{\alpha}(n,k)^2 \notag \\
& \qquad  - (n+1 -\alpha(k+1))(n-\alpha(k-1)) (n- \alpha (k+1))
  S_{\alpha}(n,k-1)S_{\alpha}(n,k+1) \Big\} \notag\\
& \quad + \Big\{ 2 (n+1 - \alpha k) (n-\alpha k) S_{\alpha}(n,k) S_{\alpha}(n,k-1)
 \notag \\
& \qquad - (n+1 - \alpha(k+1))(n-\alpha(k+1)) S_{\alpha}(n,k+1) S_{\alpha}(n,k-2)
   \notag \\
& \qquad  - (n+1 - \alpha(k+1))(n-\alpha(k-1)) S_{\alpha}(n,k) S_{\alpha}(n,k-1) \Big\}
   \notag \\
& + \Big\{ (n+1 - \alpha k) S_{\alpha}(n,k-1)^2 - (n+1 - \alpha(k+1)) S_{\alpha}(n,k-2)
  S_{\alpha}(n,k) \Big\}. \label{eqn:bigsum}
\end{align}
We take each of the three terms in braces separately.  For the first term,
we note that
\[
(n+1 - \alpha k)(n-\alpha k) - (n+1 - \alpha(k+1))(n-\alpha(k-1))
= \alpha(\alpha - 1).
\]
Hence, by the induction hypothesis, the first term is greater than or
equal to
\[
\alpha(\alpha - 1) (n-k\alpha) S_{\alpha}(n,k)^2.
\]
Since $\alpha < 0$, this is non-negative.  For the second term in
(\ref{eqn:bigsum}), we note that applying the induction hypothesis
twice entails that
\[
(n-\alpha(k-1)) S_{\alpha}(n,k)S_{\alpha}(n,k-1) \geq (n-\alpha(k+1)) S_{\alpha}(n,k+1) S_{\alpha}(n,k-2).
\]
So
\begin{align*}
& 2(n+1 - \alpha(k+1))(n - \alpha(k-1)) S_{\alpha}(n,k) S_{\alpha}(n,k-1) \\
& \geq (n+1 - \alpha(k+1))(n-\alpha(k+1)) S_{\alpha}(n,k+1) S_{\alpha}(n,k-2)\\
& \qquad + (n+1 - \alpha(k+1)) (n- \alpha(k-1)) S_{\alpha}(n,k) S_{\alpha}(n,k-1).
\end{align*}
It follows that the second term in braces is bounded below by
\begin{align*}
& 2 S_{\alpha}(n,k) S_{\alpha}(n,k-1) \left[ (n+1 - \alpha k)(n-\alpha k) - (n+1 -
  \alpha(k+1))(n - \alpha(k-1)) \right]  \\
& \qquad = 2 \alpha (\alpha - 1) S_{\alpha}(n,k) S_{\alpha}(n,k-1).
\end{align*}
This is non-negative.  Finally, we turn to the third term in braces in
(\ref{eqn:bigsum}).  This is equal to
\begin{align*}
& [(n - \alpha (k-1)) S_{\alpha}(n,k-1)^2 - (n - \alpha k)
S_{\alpha}(n,k-2) S_{\alpha}(n,k)] \\
& - (\alpha - 1) [S_{\alpha}(n,k-1)^2 - S_{\alpha}(n,k-2) S_{\alpha}(n,k)].
\end{align*}
By the induction hypothesis (at $n$ and $k-1$), the first term is
non-negative and so this quantity is bounded below by
\[
(1 - \alpha) [S_{\alpha}(n,k-1)^2 - S_{\alpha}(n,k-2) S_{\alpha}(n,k)].
\]
That $S_{\alpha}(n,k-1)^2 - S_{\alpha}(n,k-2) S_{\alpha}(n,k) \geq 0$
is implied by the induction hypothesis (in fact it is a weaker
statement) and so, since $\alpha < 0$, the above quantity is
non-negative.  So (\ref{eqn:bigsum}) is non-negative, as required.
The case $\alpha = -\infty$ follows similarly.
\end{proof}

\section*{Acknowledgments}
We are grateful to Jay Taylor and Amandine V\'eber for organising the
reading group on Pitman's Saint-Flour notes \cite{PitmanStFl} which
led to the work in this paper.  We would like to thank Sasha Gnedin
and Nathana\"el Berestycki for valuable discussions.  C.~G.\ was
funded by EPSRC Postdoctoral Fellowship EP/D065755/1. D.~S.\ was
funded in part by EPSRC grant GR/T21783/01.

\bibliography{permutation}

\end{document}